\theoremstyle{plain}
\newtheorem{theorem}{Theorem}
\newtheorem{corollary}{Corollary}
\newtheorem{lemma}{Lemma}
\theoremstyle{definition}
\newtheorem{remark}{Remark}
\newcommand{\x}{\mathbf{x}}
\newcommand{\y}{\mathbf{y}}
\newcommand{\z}{\mathbf{z}}
\newcommand{\R}{\mathbf{R}}
\title{Convergence and Divergence of Approximations\\in terms of the Derivatives of Heat Kernel}
\author{Jaywan Chung\footnote{Department of Mathematics, Dankook University, 119 Dandae-ro, Dongnam-gu, Cheonan-si, Chungnam 330-714, Korea}}
\date{}
\begin{document}
\maketitle
\begin{abstract}
We consider an approximate solution to the heat equation which consists of the derivatives of heat kernel. Some conditions in the initial value, under which the approximation converges to the solution of the heat equation or diverges when the number of terms of the approximation goes to infinity with a fixed time $t$, will be given. For example, when the initial data is a Gaussian $e^{-|\x|^2/4t_0}$, the approximation converges when $t>t_0$. But if $t<t_0$, it diverges to infinity. Also the $L^\infty$-error estimate will be given and the meaning of the approximation will be clarified by comparing with eigenfunction expansion.
\end{abstract}

\section{Introduction}
Consider the Cauchy problem for the heat equation in the whole space $\R^d$:
\[ \begin{cases}
u_t=\Delta u & \text{in $\R^d \times (0,\infty)$,} \\
u(\x,0)=u_0(\x) & \text{in $\R^d$}.
\end{cases} \]
The problem has a solution
\begin{equation*}
G(\x,t) := (4\pi t)^{-\frac{d}{2}} \exp\Big(-\frac{|\x|^2}{4t}\Big),
\end{equation*}
of which the initial value is a Dirac mass. This solution is called the fundamental solution or the \emph{heat kernel}. Using a linear combination of the spatial derivatives of the heat kernel, we can approximate the solution $u$, as introduced by Duoandikoetxea and Zuazua \cite{MR1183805}. More precisely, it can be shown that \cite[Theorem 4]{MR1183805}:
\begin{theorem}[Duoandikoetxea-Zuazua]
If $1 \leq p < \frac{d}{d-1}$ and $p \leq q \leq \infty$, then there exists a constant $C = C(k,p,q,d) > 0$ such that
\begin{equation*}
\begin{aligned}
\Big\| u(\cdot,t) - \sum_{|\alpha|\leq k} \frac{(-1)^{|\alpha|}}{\alpha!} &\Big(\int \x^\alpha u_0(\x)\,d\x \Big) D^\alpha G(\cdot,t) \Big\|_q \\
& \leq C t^{-\frac{k+1}{2} - \frac{d}{2} \left( \frac{1}{p} - \frac{1}{q} \right)} \big\| |\x|^{k+1} u_0(\x) \big\|_p
\end{aligned}
\end{equation*}
for all $u_0 \in L^1(\R^d; 1+|\x|^k)$ satisfying $|\x|^{k+1}u_0(\x) \in L^p(\R^d)$. Here $k \geq 0$ is an integer, $\alpha=(\alpha_1,\alpha_2,\cdots,\alpha_d)\in\R^d$ is an multi-index, $D^\alpha=\partial_{x_1}^{\alpha_1}\partial_{x_2}^{\alpha_2}\cdots\partial_{x_d}^{\alpha_d}$. Also $L^1(\R^d;1+|\x|^k)$ means the set of all integrable functions with a weight function $1+|\x|^k$.
\end{theorem}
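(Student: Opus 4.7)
The plan is to start from the convolution representation $u(\x,t) = \int G(\x-\y,t)\,u_0(\y)\,d\y$ and Taylor-expand $G(\x-\y,t)$ in the $\y$-variable about $\y=0$. Because $D_\y^\alpha G(\x-\y,t)|_{\y=0} = (-1)^{|\alpha|} D^\alpha G(\x,t)$, Taylor's formula with integral remainder yields
\[
G(\x-\y,t) = \sum_{|\alpha|\le k} \frac{(-1)^{|\alpha|}}{\alpha!}\,\y^\alpha D^\alpha G(\x,t) + R_k(\x,\y,t),
\]
\[
R_k(\x,\y,t) = (k+1)(-1)^{k+1}\!\!\sum_{|\alpha|=k+1} \frac{\y^\alpha}{\alpha!}\int_0^1 (1-s)^k D^\alpha G(\x-s\y,t)\,ds.
\]
Integrating against $u_0$, which has finite moments up to order $k$ by the $L^1(\R^d;1+|\x|^k)$ hypothesis, produces exactly the asserted $k$-term approximation plus an error term $\int R_k(\cdot,\y,t)u_0(\y)\,d\y$ whose $L^q$-norm is what remains to be estimated.

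A representative summand of this error has the form
\[
E_\alpha(\x,t) = \int_0^1 (1-s)^k \Bigl[\int \y^\alpha D^\alpha G(\x-s\y,t)\,u_0(\y)\,d\y\Bigr]ds, \qquad |\alpha|=k+1.
\]
The inner integral is not quite a convolution, but the substitution $\z = s\y$ converts it into $(D^\alpha G(\cdot,t)\ast h_s)(\x)$ with $h_s(\z) = s^{-|\alpha|-d}\,\z^\alpha u_0(\z/s)$. Reversing the rescaling in the $L^p$-norm yields $\|h_s\|_p \le s^{-d/p'}\,\||\y|^{k+1}u_0\|_p$, where $1/p + 1/p' = 1$. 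Young's convolution inequality with the exponent identity $1/r + 1/p = 1 + 1/q$ (so that $r \ge 1$ uses the standing assumption $q \ge p$) then delivers
\[
\|E_\alpha(\cdot,t)\|_q \le C\,\|D^\alpha G(\cdot,t)\|_r\,\||\y|^{k+1}u_0\|_p \int_0^1 (1-s)^k s^{-d/p'}\,ds.
\]

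Two explicit computations finish the proof. First, parabolic rescaling $D^\alpha G(\x,t) = t^{-(d+|\alpha|)/2}(D^\alpha G_1)(\x/\sqrt t)$ with $G_1 := G(\cdot,1)$ gives $\|D^\alpha G(\cdot,t)\|_r = C_\alpha\, t^{-(k+1)/2-(d/2)(1/p-1/q)}$, which is exactly the time power claimed. Second, the Beta-type integral $\int_0^1 (1-s)^k s^{-d/p'}\,ds$ is finite precisely when $d/p' < 1$, i.e.\ $p < d/(d-1)$. I expect this final step to be the main obstacle and also the conceptual heart of the argument: the dimensional restriction $p < d/(d-1)$ in the hypothesis is not a technical artifact but encodes exactly the integrability of the Taylor remainder in the scaling parameter $s$ near $s = 0$, and it is where one must verify that Young's inequality has been applied with the right pair of exponents.
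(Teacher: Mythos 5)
Your proposal is correct and follows essentially the same route as the paper's own machinery (Lemma~1 and its use in Theorem~2): a Taylor expansion of order $k$ with integral remainder, the substitution $\z = s\y$ turning the remainder into a convolution with the rescaled function $h_s(\z) = s^{-|\alpha|-d}\z^\alpha u_0(\z/s)$ --- which is exactly the integrand defining the paper's $F_\alpha$ --- followed by Young's inequality and parabolic scaling of $\|D^\alpha G(\cdot,t)\|_r$. The only difference is organizational: the paper packages $\int_0^1 (k+1)(1-s)^k h_s\,ds$ into a single $L^1$ function $F_\alpha$ and works out only the case $p=1$, $q=\infty$ (where the Beta integral is trivially $1$), whereas you keep the $s$-integral outside and correctly identify that for general $p$ the convergence of $\int_0^1(1-s)^k s^{-d/p'}\,ds$ is precisely where the hypothesis $p < d/(d-1)$ is used.
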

The theorem ensures that the finite sum in terms of the derivatives of the heat kernel converges to the solution when $t\to\infty$. But unfortunately, the constant $C$ in the theorem depends on $k$, which is related to the number of terms to be summed. Hence we cannot determine what happens when the number of terms to be summed goes to infinity, i.e., when $k\to\infty$. In this paper, we will find some conditions under which the series converges to the solution or just diverges when $k\to\infty$ with a fixed time $t$. For example, when the initial data is a Gaussian $e^{-|\x|^2/4t_0}$, the series converges when $t>t_0$ (see Corollary \ref{heat-approx:cor}). But if $t<t_0$, the series diverges to infinity (see Theorem \ref{thm:divergence-approx}). Also in the last section, we will clarify the meaning of the approximation and the conditions by comparing with eigenfunction expansion.

Before proceeding, we introduce some related works. The spatial derivatives of the heat kernel has been also known as \emph{associated functions}. The term emphasizes they are associated with heat polynomials through the Appell transform\footnote{For detailed definitions of the heat polynomials and the associated functions, refer to \cite{Rosenbloom59,Widder61}.}. Rosenbloom and Widder \cite{Rosenbloom59} studied necessary and sufficient conditions under which series expansion of the solution in terms of the heat polynomials and the associated functions is valid \emph{assuming} the series converges. More precisely when $d=1$, they showed that \cite[Theorem 12.3]{Rosenbloom59} a necessary and sufficient condition that
\[u(x,t)=\sum_{k=0}^\infty \frac{(-1)^{k}}{k!} \Big(\int x^k u_0(x)\,dx \Big)  \,\partial_x^k G(x,t),\]
the series converging for $t>\sigma\geq 0$, is that $u(x,t)$ satisfies Huygens principle\footnote{Definition of the Huygens principle, which reminds us the semi-group property, can be found in \cite[Definition 8.1]{Rosenbloom59}. For example, the principle holds for solutions of the Cauchy problem with $L^1$-initial data. } there and that
\begin{equation}\label{cond:Rosen-Widder}
\int_{-\infty}^\infty |u(x,t)| \,e^{x^2/8t}\,dx < \infty \quad \text{for $\sigma < t < \infty$.}
\end{equation}
And since then similar kinds of condition have been obtained in several contexts \cite{Bragg65,Fitouhi89,Haimo65,Widder61}. The integrability condition \eqref{cond:Rosen-Widder} is stated for the solution (not the initial data) and the result focuses on just equality (not convergence of the series). In this paper, we will find convergence and divergence criteria in terms of the initial data and study convergence of the series with an $L^\infty$-error estimate. This approach can be more helpful when we consider the series as an approximate solution to the Cauchy problem because now we can measure the error.

As an application, the approximation considered here can be used to find asymptotic behavior of solutions. For example, we can study long-time asymptotic behavior of the level sets of solutions \cite{Chung12}; the level sets give clear pictures on evolution of solutions.

One may use the heat kernels \emph{having different center of mass}, instead of the derivatives of them. It turns out that this approach is related with a moment problem. Researches in this direction can be found in \cite{MR2600962,MR2773137,KN09}.

\section{Convergence and Divergence Conditions}
In this section, we will find some conditions under which an approximate solution in terms of the derivatives of heat kernel converges or diverges.

The following lemma considers a decomposition of some integrable functions, which was a main point in Duoandikoetxea and Zuazua \cite{MR1183805}. The statement is almost the same as theirs but the difference is that we find a more precise estimate on the remainder \eqref{heat:L1-bound-of-F_alpha}. Although the proof is not very different with one in \cite{MR1183805}, we give a proof for completeness.
\begin{lemma}
  Assume $f \in L^1 (\R^d; 1+|\x|^{k+1} )$ for some integer $k \geq 0$. Then there exists a family of functions $\{ F_\alpha \}$ such that
  \begin{equation} \label{heat:dirac-decomp}
    f = \sum_{|\alpha| \leq k} \frac{(-1)^{|\alpha|}}{\alpha!} \left( \int \x^{\alpha} f(\x) \,d\x \right) D^\alpha \delta + \sum_{|\alpha|=k+1} D^\alpha F_\alpha
  \end{equation}
  in the sense of tempered distribution. Moreover, $F_\alpha \in L^1(\R^d)$ and
  \begin{equation} \label{heat:L1-bound-of-F_alpha}
    \|F_\alpha\|_1 \leq \frac{\| \x^\alpha f(\x) \|_1}{\alpha!}.
  \end{equation}
\end{lemma}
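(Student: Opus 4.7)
The plan is to build $F_\alpha$ from a Taylor expansion of test functions and then verify both the distributional identity and the $L^1$ bound by direct calculation together with a change of variables. For $\varphi \in \mathcal{S}(\R^d)$, applying Taylor with integral remainder to $s\mapsto \varphi(s\x)$ at $s=1$ and using the multinomial identity yields
\[
\varphi(\x) = \sum_{|\alpha|\leq k}\frac{\x^\alpha}{\alpha!}D^\alpha\varphi(0) + (k+1)\sum_{|\alpha|=k+1}\frac{\x^\alpha}{\alpha!}\int_0^1 (1-s)^k D^\alpha\varphi(s\x)\,ds.
\]
Multiplying by $f$ and integrating (Fubini is legitimate thanks to $|\x|^{k+1}f\in L^1$), the polynomial part becomes $\sum_{|\alpha|\leq k}\frac{D^\alpha\varphi(0)}{\alpha!}\int \x^\alpha f\,d\x$, which matches the first sum in \eqref{heat:dirac-decomp} tested against $\varphi$ by $\langle D^\alpha\delta,\varphi\rangle = (-1)^{|\alpha|}D^\alpha\varphi(0)$. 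Everything then reduces to writing the remainder $\int f R_k\,d\x$ as $\sum_{|\alpha|=k+1}\langle D^\alpha F_\alpha,\varphi\rangle = (-1)^{k+1}\sum\int F_\alpha D^\alpha\varphi\,d\y$ for a suitable $F_\alpha$.

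The substitution $\y=s\x$ (for fixed $s>0$) inside the remainder moves $D^\alpha\varphi$ to the dummy variable $\y$ and concentrates all the $s$-dependence into the argument of $f$ together with the Jacobian. Reading off the coefficient of $D^\alpha\varphi(\y)$ motivates the explicit definition
\[
F_\alpha(\y) := \frac{(-1)^{k+1}(k+1)}{\alpha!}\,\y^\alpha \int_0^1 (1-s)^k\,s^{-(k+1+d)}\,f(\y/s)\,ds,
\]
which, after the change of variable $\tau=1/s$, is also $\frac{(-1)^{k+1}(k+1)}{\alpha!}\y^\alpha\int_1^\infty (\tau-1)^k\tau^{d-1}f(\tau\y)\,d\tau$. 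With this formula the bound \eqref{heat:L1-bound-of-F_alpha} follows by Tonelli: applying $\x=\y/s$ to the inner integral gives $\int |\y^\alpha||f(\y/s)|\,d\y = s^{k+1+d}\|\x^\alpha f\|_1$, which cancels the factor $s^{-(k+1+d)}$ exactly, and then $\int_0^1(1-s)^k\,ds = 1/(k+1)$ yields the sharp constant $1/\alpha!$.

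The same computation, run backwards, verifies the distributional identity: for $\varphi\in\mathcal{S}$ one has $\langle D^\alpha F_\alpha,\varphi\rangle = (-1)^{k+1}\int F_\alpha D^\alpha\varphi\,d\y$, and undoing $\y=s\x$ converts this into $(k+1)\frac{1}{\alpha!}\int_0^1(1-s)^k\int \x^\alpha f(\x)D^\alpha\varphi(s\x)\,d\x\,ds$; summing over $|\alpha|=k+1$ reproduces exactly $\int f R_k\,d\x$. The main obstacle is the apparent singularity at $s=0$ coming from $s^{-(k+1+d)}$, which is where the argument could easily fail. The resolution is that this factor is absorbed exactly by $\y^\alpha = s^{k+1}\x^\alpha$ together with the $s^d$ from the Jacobian, which is simultaneously why Fubini applies, why $F_\alpha$ is an honest $L^1$ function rather than merely a distribution, and why the constant in \eqref{heat:L1-bound-of-F_alpha} is sharp.
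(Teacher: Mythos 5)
Your proposal is correct and follows essentially the same route as the paper: the same Taylor expansion with integral remainder tested against a Schwartz function, the same change of variables producing the identical explicit formula for $F_\alpha$, and the same exact cancellation of the Jacobian factor $s^{-(k+1+d)}$ against $\y^\alpha = s^{k+1}\x^\alpha$ and $s^d\,d\x$ to obtain the bound \eqref{heat:L1-bound-of-F_alpha}. The only cosmetic difference is that the paper phrases the final $L^1$ estimate via Minkowski's inequality for integrals where you use Tonelli; the computation is the same.
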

\begin{proof}
  Let $\phi$ be a Schwartz function. By the Taylor's formula,
  \begin{equation*}
    \phi(\x) - (P_k \phi)(\x) = (k+1) \int_0^1 (1-t)^k \sum_{|\alpha|=k+1} \frac{\x^\alpha}{\alpha!} D^\alpha \phi(t\x)\,dt
  \end{equation*}
  where $P_k$ denotes the Taylor polynomial at $\x=0$. Hence
  \begin{align*}
    \Big\langle f - &\sum_{|\alpha| \leq k} \frac{(-1)^{|\alpha|}}{\alpha!} \Big( \int \x^{\alpha} f(\x)\,d\x \Big) D^\alpha \delta, ~\phi \Big\rangle \\
    &=\int f(\x)\phi(\x)\,d\x - \sum_{|\alpha|\leq k} \frac{ (D^\alpha \phi)(0)}{\alpha!} \int \x^\alpha f(\x)\,d\x \\
    &=\int f(\x)\phi(\x)\,d\x - \int f(\x) \sum_{|\alpha|\leq k} \frac{ (D^\alpha \phi)(0)}{\alpha!} \x^\alpha \,d\x \\
    &=\int f(\x) \big( \phi(\x) - (P_k \phi)(\x) \big)\,d\x \\
    &=(k+1) \sum_{|\alpha|=k+1} \int_{\R^d} \int_0^1 f(\x) (1-t)^k \frac{\x^\alpha}{\alpha!} D^\alpha \phi(t\x)\,dt\,d\x \\
    &=\sum_{|\alpha|=k+1} \int_{\R^d} D^\alpha \phi(\x) \int_0^1 (k+1) \frac{(1-t)^k}{t^{d+ |\alpha|}} \frac{\x^\alpha}{\alpha!} f\Big(\frac{\x}{t}\Big) \,dt\,d\x \\
    &=\sum_{|\alpha|=k+1} \langle D^\alpha F_\alpha, ~\phi \rangle,
  \end{align*}
  where
  \begin{equation*}
    F_\alpha(\x) := (-1)^{|\alpha|} \int_0^1 (k+1)\frac{(1-t)^k}{t^{d+|\alpha|}} \frac{\x^\alpha}{\alpha!} f\Big(\frac{\x}{t}\Big)\,dt, \quad k = |\alpha|-1.
  \end{equation*}
  By the Minkowski's inequality for integrals,
  \begin{align*}
    \|F_\alpha\|_1 &\leq \int_0^1 \int_{\R^d} \left| (k+1)\frac{(1-t)^k}{t^{d+|\alpha|}} \frac{\x^\alpha}{\alpha!} f(\x/t) \right|\,d\x\,dt \\
    &= \frac{\| \x^\alpha f(\x) \|_1}{\alpha!} \int_0^1 (k+1) (1-t)^k\,dt = \frac{\| \x^\alpha f(\x) \|_1}{\alpha!},
  \end{align*}
which completes the proof.
\end{proof}

Combininig this lemma, an explicit formula for solutions and an estimate for  Hermite polynomials, we can obtain the following uniform estimate.

\begin{theorem}[Hermite polynomial approximation]
Let $u$ be the solution of the heat equation with an initial data $u_0 \in L^1(\R^d; 1+|\x|^{k+1})$ for some nonnegative integer $k$. Then for all $x \in \R^d$,
\begin{equation} \label{heat:estimate-Hermite-rep}
\begin{split}
\Big| u(\x,t) -& \pi^{-\frac{d}{2}} e^{-\frac{|\x|^2}{4t}} \sum_{|\alpha| \leq k} \frac{\int \x^\alpha u_0(\x)\,d\x}{\alpha!} (4t)^{-\frac{|\alpha|+d}{2}} \prod_{i=1}^d H_{\alpha_i} \Big(\frac{x_i}{2\sqrt{t}}\Big) \Big| \\
&\quad \leq (2\pi)^{-\frac{d}{2}} (2t)^{-\frac{k+d+1}{2}} \sum_{|\alpha|=k+1} \frac{\| \x^\alpha u_0(\x) \|_1}{\sqrt{\alpha!}} \Big( \prod_{i=1}^d (\alpha_i+1) \Big)^{-\frac{1}{12}},
\end{split}
\end{equation}
where the functions $H_n(x)$ are the (physicists') Hermite polynomials defined by
\begin{equation*}
H_n(x) := (-1)^n e^{x^2} \frac{d^n}{dx^n} e^{-x^2}.
\end{equation*}
\end{theorem}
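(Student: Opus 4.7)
The plan is to apply the preceding decomposition lemma to the initial datum $u_0\in L^1(\R^d;1+|\x|^{k+1})$ and propagate both sides of the distributional identity by convolving with the heat kernel $G(\cdot,t)$. Since $G(\cdot,t)*\delta=G(\cdot,t)$, convolution commutes with $D^\alpha$, and $u(\cdot,t)=G(\cdot,t)*u_0$, this produces the exact representation
\[
u(\cdot,t)=\sum_{|\alpha|\le k}\frac{(-1)^{|\alpha|}}{\alpha!}\Big(\int\x^\alpha u_0\,d\x\Big)D^\alpha G(\cdot,t)+\sum_{|\alpha|=k+1}\bigl(D^\alpha G(\cdot,t)\bigr)*F_\alpha,
\]
separating the approximating sum in the derivatives of the heat kernel from a remainder whose convolution factor $F_\alpha$ already obeys the $L^1$ bound \eqref{heat:L1-bound-of-F_alpha}.

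Next I rewrite each $D^\alpha G$ in terms of Hermite polynomials. By Rodrigues' formula $\partial_y^n e^{-y^2}=(-1)^n H_n(y)e^{-y^2}$ and the scaling $y_i=x_i/(2\sqrt t)$, a direct computation using $e^{-|\x|^2/4t}=\prod_i e^{-y_i^2}$ gives
\[
D^\alpha G(\x,t)=(-1)^{|\alpha|}\pi^{-d/2}(4t)^{-(|\alpha|+d)/2}e^{-|\x|^2/4t}\prod_{i=1}^d H_{\alpha_i}\!\Big(\frac{x_i}{2\sqrt t}\Big).
\]
Substituting this into the first sum of the representation lets the signs $(-1)^{2|\alpha|}$ cancel and reproduces exactly the Hermite expansion appearing inside the absolute value in \eqref{heat:estimate-Hermite-rep}. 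For the remainder, Young's convolution inequality gives $\|(D^\alpha G(\cdot,t))*F_\alpha\|_\infty\le\|D^\alpha G(\cdot,t)\|_\infty\|F_\alpha\|_1$, so after invoking \eqref{heat:L1-bound-of-F_alpha} the whole problem reduces to a sharp pointwise bound on $D^\alpha G(\cdot,t)$, which by the displayed formula factorises as $\pi^{-d/2}(4t)^{-(|\alpha|+d)/2}\prod_i\max_{y\in\R}|H_{\alpha_i}(y)|e^{-y^2}$.

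The main obstacle is the one-dimensional pointwise estimate that must simultaneously supply the factor $\sqrt{\alpha!}$ (replacing the $\alpha!$ already present in \eqref{heat:L1-bound-of-F_alpha}) and the sub-geometric decay $(\alpha_i+1)^{-1/12}$. I would invoke a sharp Szego--Cramer type bound
\[
\max_{y\in\R}|H_n(y)|e^{-y^2}\le 2^{n/2}\sqrt{n!}\,(n+1)^{-1/12},
\]
a refinement of the standard Hermite bound that can be extracted from the Plancherel--Rotach asymptotics for the Hermite functions. Multiplying it over the $d$ coordinates contributes $2^{|\alpha|/2}\sqrt{\alpha!}\prod_i(\alpha_i+1)^{-1/12}$; the factor $2^{|\alpha|/2}$ converts $(4t)^{-(|\alpha|+d)/2}$ into $2^{-d/2}(2t)^{-(|\alpha|+d)/2}$, and merging the surplus $2^{-d/2}$ with $\pi^{-d/2}$ produces the prefactor $(2\pi)^{-d/2}$. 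Summing over $|\alpha|=k+1$ then yields \eqref{heat:estimate-Hermite-rep}.
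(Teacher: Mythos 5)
Your proposal is correct and follows essentially the same route as the paper: apply the decomposition lemma to $u_0$, convolve with the heat kernel, express $D^\alpha G$ via Hermite polynomials, apply Young's inequality together with the $L^1$ bound \eqref{heat:L1-bound-of-F_alpha}, and invoke the refined bound $\max_x |H_n(x)|e^{-x^2}\le 2^{n/2}\sqrt{n!}\,(n+1)^{-1/12}$ (which the paper attributes to Bonan and Clark). The bookkeeping of the powers of $2$ and the cancellation producing $\|\x^\alpha u_0\|_1/\sqrt{\alpha!}$ all match the paper's computation.
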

\begin{proof}
The solution $u$ is given by the convolution with the heat kernel
\begin{equation*}
u(\x,t) = [G(\cdot,t) * u_0](\x) = \int G(\x-\y,t) u_0(\y)\,d\y.
\end{equation*}
We put the representation \eqref{heat:dirac-decomp} of $u_0$ here. Then
  \begin{align*}
    u(\x,t) &= \langle u_0, ~G(\x-\cdot,t) \rangle \\
    &= \Big\langle \sum_{|\alpha| \leq k} \frac{(-1)^{|\alpha|}}{\alpha!} \Big( \int \x^{\alpha} u_0(\x)\,d\x \Big) D^\alpha \delta + \sum_{|\alpha|=k+1} D^\alpha F_\alpha, ~G(\x-\cdot,t) \Big\rangle \\
    &= \sum_{|\alpha| \leq k} \frac{(-1)^{|\alpha|}}{\alpha!} \Big( \int \x^\alpha u_0(\x)\,d\x \Big) D^\alpha G(\x,t) + \sum_{|\alpha|=k+1} \big\langle F_\alpha, \,D^\alpha G(\x-\cdot,t) \big\rangle.
  \end{align*}
  Hence by Young's inequality,
\[ \begin{split}
      &\Big\lVert u(\cdot,t) - \sum_{|\alpha| \leq k} \frac{(-1)^{|\alpha|}}{\alpha!} \Big( \int \x^\alpha u_0(\x)\,d\x \Big) D^\alpha G(\cdot,t) \Big\rVert_\infty \\
      &= \Big\lVert \sum_{|\alpha|=k+1} D^\alpha G(\cdot,t) * F_\alpha \Big\rVert_\infty \leq \sum_{|\alpha|=k+1} \|D^\alpha G(\cdot,t)\|_\infty \, \|F_\alpha\|_1.
\end{split} \]
Now we estimate $\|D^\alpha G(\cdot,t)\|_\infty$. Let $\y=(y_i)=\x/(2\sqrt{t})$. Then the derivatives of the heat kernel can be written as a product of Hermite polynomials:
  \begin{align*}
    D^\alpha_{\x} G(\x,t) &= (4\pi t)^{-\frac{d}{2}} D^\alpha_{\x} \exp\Big(-\frac{|\x|^2}{4t}\Big) \\
    &= (4\pi t)^{-\frac{d}{2}} D^\alpha_{\y} \exp(-|\y|^2) \frac{1}{(2\sqrt{t})^{|\alpha|}} \\
    &= \pi^{-\frac{d}{2}} (4t)^{-\frac{|\alpha|+d}{2}} (-1)^{|\alpha|} \prod_{i=1}^d \big[ H_{\alpha_i}(y_i) e^{-y_i^2} \big]
  \end{align*}
  and we use an estimate for Hermite polynomials (see Bonan and Clark \cite{bonan90})
  \begin{equation*}
    \max_{x \in \R} \,|H_n(x)| e^{-x^2} \leq 2^{\frac{n}{2}} \sqrt{n!} \,(n+1)^{-\frac{1}{12}} \quad \text{for all $n \geq 0$}
  \end{equation*}
  to obtain
  \begin{equation*}
    \|D^\alpha G(\cdot,t)\|_\infty \leq \pi^{-\frac{d}{2}} \, 2^{-\frac{|\alpha|}{2}-d} \, t^{-\frac{|\alpha|+d}{2}} \sqrt{\alpha!} \Big( \prod_{i=1}^d (\alpha_i+1) \Big)^{-\frac{1}{12}}.
  \end{equation*}
Finally we use the $L^1$-estimate of $F_\alpha$ in \eqref{heat:L1-bound-of-F_alpha} to complete the proof.
\end{proof}

We denote by $u_k$ the approximation in terms of the derivatives of heat kernel (or Hermite polynomials):
\begin{equation}\label{eq:approx-form}
\begin{split}
 u_k(\x,t) &:= \sum_{|\alpha|\leq k} \frac{(-1)^{|\alpha|}}{\alpha!} \Big(\int \x^\alpha u_0(\x)\,d\x \Big) D^\alpha G(\x,t)\\
&=\pi^{-\frac{d}{2}} e^{-\frac{|\x|^2}{4t}} \sum_{|\alpha| \leq k} \frac{\int \x^\alpha u_0(\x)\,d\x}{\alpha!} \, (4t)^{-\frac{|\alpha|+d}{2}} \prod_{i=1}^d H_{\alpha_i} \Big(\frac{x_i}{2\sqrt{t}}\Big).
\end{split}
\end{equation}

Now we are ready to show that when the fixed time $t$ is sufficiently large, an $L^1$-initial data bounded by a Gaussian guarantees the $L^\infty$-convergence of the approximation as more terms are summed.

\begin{corollary}[Convergence Condition] \label{heat-approx:cor}
Assume the initial data is bounded by a Gaussian: $|u_0(\x)| \leq C e^{-\frac{|\x|^2}{4t_0}}$ a.e. $\x$ for some positive constants $C$ and $t_0$. Then it holds that
  \begin{equation*}
    \big\| u(\cdot,t) - u_k(\cdot,t) \big\|_\infty  \leq C \Big(\frac{t_0}{t}\Big)^{\frac{k+d+1}{2}}  \Big(1 + \frac{k+1}{d}\Big)^{-\frac{d}{12}} \frac{(k+d)!}{(k+1)!\,(d-1)!} =: G(k).
  \end{equation*}
\end{corollary}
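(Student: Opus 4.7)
The plan is to apply the preceding Hermite polynomial approximation theorem to the Gaussian envelope, compute the resulting moments in closed form, and reduce the bound to a combinatorial sum that can be estimated via AM-GM.

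First I would compute $\|\x^\alpha u_0\|_1$. Because $e^{-|\x|^2/(4t_0)}$ factors coordinate-wise, the moment splits into a product of one-dimensional Gaussian moments $\int|x|^n e^{-x^2/(4t_0)}\,dx = 2^{n+1}t_0^{(n+1)/2}\Gamma((n+1)/2)$, yielding
\[
\|\x^\alpha u_0\|_1 \leq C\cdot 2^{k+1+d}\,t_0^{(k+1+d)/2}\prod_{i=1}^d \Gamma\!\left(\tfrac{\alpha_i+1}{2}\right)
\]
for $|\alpha|=k+1$. To dispose of the $\sqrt{\alpha!}$ in the theorem's denominator, I would invoke the Legendre duplication formula $\Gamma((n+1)/2)\Gamma((n+2)/2) = 2^{-n}\sqrt{\pi}\,n!$ together with the monotonicity of $\Gamma(x)/\Gamma(x+1/2)$ on $(0,\infty)$ (a consequence of the digamma function being increasing) to derive the uniform inequality $\Gamma((n+1)/2)\leq\sqrt{\pi\,n!/2^n}$. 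Taking the product over coordinates yields $\prod_i\Gamma((\alpha_i+1)/2)/\sqrt{\alpha!}\leq\pi^{d/2}/2^{(k+1)/2}$.

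Plugging both ingredients into the theorem, all stray constants collapse: $(2\pi)^{-d/2}\cdot\pi^{d/2}=2^{-d/2}$ combines with the residual powers of $2$ to give exactly $1$, and the powers of $t$, $t_0$ merge into $(t_0/t)^{(k+d+1)/2}$. The remaining expression is
\[
\|u-u_k\|_\infty\leq C\,(t_0/t)^{(k+d+1)/2}\sum_{|\alpha|=k+1}\prod_{i=1}^d(\alpha_i+1)^{-1/12}.
\]
To conclude, I would apply AM-GM to the numbers $\{\alpha_i+1\}$, whose arithmetic mean equals $1+(k+1)/d$, to extract the factor $(1+(k+1)/d)^{-d/12}$, and count the multi-indices of order $k+1$, which number $\binom{k+d}{d-1}=(k+d)!/[(k+1)!(d-1)!]$.

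The step I expect to be most delicate is the last one: in its basic form AM-GM bounds $\prod_i(\alpha_i+1)$ from above by $(1+(k+1)/d)^d$, which after the $-1/12$-power only yields the target factor as a lower bound on each summand. Obtaining the stated upper bound will require either tightening the Gamma estimate of step two (which has slack for $\alpha_i\geq1$) or redistributing the inequality through a H\"older-type argument tuned to the exponent $-1/12$, so that the geometric-mean factor surfaces in the correct direction after summation.
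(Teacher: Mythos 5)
Your proposal reconstructs the paper's proof essentially step for step: the same Gaussian moment computation $\|\x^\alpha u_0\|_1 \le C(4t_0)^{(|\alpha|+d)/2}\prod_i\Gamma\big((\alpha_i+1)/2\big)$, the same use of the Legendre duplication formula to cancel the $\sqrt{\alpha!}$, the same collapse of all powers of $2$ and of $t,t_0$ into $(t_0/t)^{(k+d+1)/2}$, and the same final step of AM--GM plus the count $\binom{k+d}{d-1}$ of multi-indices of order $k+1$. So you have found the intended argument.

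The ``delicate step'' you flag at the end is a genuine gap --- and, notably, it is present in the paper's own proof in exactly the same place. The paper passes from $\sum_{|\alpha|=k+1}\big(\prod_i(\alpha_i+1)\big)^{-1/12}$ to $\big(1+\tfrac{k+1}{d}\big)^{-d/12}\sum_{|\alpha|=k+1}1$ by citing AM--GM, but as you observe, AM--GM gives $\prod_i(\alpha_i+1)\le\big(1+\tfrac{k+1}{d}\big)^d$ and hence $\big(\prod_i(\alpha_i+1)\big)^{-1/12}\ge\big(1+\tfrac{k+1}{d}\big)^{-d/12}$, the reverse of what is needed. The pointwise inequality is in fact false for $d\ge2$: with $d=2$, $k+1=2$, $\alpha=(2,0)$ one has $\prod_i(\alpha_i+1)=3<4=\big(1+\tfrac{k+1}{d}\big)^d$, and summing over all three multi-indices still exceeds the claimed bound. (For $d=1$ the sum has a single term and the step is an identity, so nothing is wrong there.) Your proposed repair is the right one: the estimate $\Gamma\big((n+1)/2\big)\le\sqrt{\pi\,n!/2^n}$ is an equality only at $n=0$ and has genuine slack for $n\ge1$ --- indeed $\Gamma\big((n+1)/2\big)2^{n/2}/\sqrt{\pi\,n!}=\pi^{-1/4}\sqrt{\Gamma\big((n+1)/2\big)/\Gamma(n/2+1)}\sim c\,n^{-1/4}$, which supplies far more decay per coordinate than the missing factor requires --- but neither you nor the paper actually carries this through the sum over $|\alpha|=k+1$, so as written the last inequality in the chain remains unjustified in both arguments.
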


The right-hand side, $G(k)$, in the corollary contains three terms; $(t_0/t)^{\frac{k+d+1}{2}}$ is an exponential function of $k$, $(1+(k+1)/d)^{-\frac{d}{12}}$ is a power function of $k$ and $\frac{(k+d)!}{(k+1)!\,(d-1)!}$ is a polynomial of $k$. Hence if the fixed time $t$ is strictly bigger than $t_0$, the right hand side $G(k)$ goes to zero as $k$ goes to infinity and the approximation converges to the solution. If $d=1$ and $t = t_0$, then $G(k) = C(k+2)^{-\frac{1}{12}}$ so that the approximation still converges but the convergence is very slow.

\begin{proof}[Proof of Corollary \ref{heat-approx:cor}]
  Let $F(k)$ be the right hand side of the estimate \eqref{heat:estimate-Hermite-rep}:
  \begin{equation*}
    F(k) := (2\pi)^{-\frac{d}{2}} (2t)^{-\frac{k+d+1}{2}} \sum_{|\alpha|=k+1} \frac{\| \x^\alpha u_0(\x) \|_1}{\sqrt{\alpha!}} \Big( \prod_{i=1}^d (\alpha_i+1) \Big)^{-\frac{1}{12}}
  \end{equation*}
  We will show that $F(k)$ is bounded by $G(k)$. First we calculate a bound for moments:
  \begin{equation*}
    \| \x^\alpha u_0(\x) \|_1 \leq C \big\| \x^\alpha e^{-\frac{|\x|^2}{4t_0}} \big\|_1 = C (4t_0)^{\frac{|\alpha|+d}{2}} \prod_{i=1}^d \Gamma\Big(\frac{\alpha_i+1}{2}\Big).
  \end{equation*}
  From the duplication formula for Gamma functions \cite[p256]{ab72}
  \begin{equation*}
    \Gamma(z) \,\Gamma\Big(z+\frac{1}{2}\Big) = 2^{1-2z} \sqrt{\pi}\,\Gamma(2z),
  \end{equation*}
  it holds that
  \begin{equation*}
    \Gamma\Big(\frac{\alpha_i+1}{2}\Big)^2 \leq \Gamma\Big(\frac{\alpha_i+1}{2}\Big) \sqrt{\pi} \,\Gamma\Big(\frac{\alpha_i}{2}+1\Big) = 2^{-\alpha_i} \pi \,\Gamma(\alpha_i+1).
  \end{equation*}
  By these inequalities and the inequality of arithmetic and geometric means, we can find the bound of $F(k)$:
  \begin{align*}
    F(k) &\leq C (2\pi)^{-\frac{d}{2}} \Big(\frac{2t_0}{t}\Big)^{\frac{k+d+1}{2}} \sum_{|\alpha|=k+1} \Big(\prod_{i=1}^d \frac{\Gamma\big((\alpha_i+1)/2\big)}{\sqrt{\Gamma(\alpha_i+1)}}\Big)  \Big( \prod_{i=1}^d (\alpha_i+1) \Big)^{-\frac{1}{12}} \\
    &\leq C \left(\frac{t_0}{t}\right)^{\frac{k+d+1}{2}}  \Big(1 + \frac{k+1}{d}\Big)^{-\frac{d}{12}} \sum_{|\alpha|=k+1} 1 \\
    &= C \left(\frac{t_0}{t}\right)^{\frac{k+d+1}{2}}  \Big(1 + \frac{k+1}{d}\Big)^{-\frac{d}{12}} \frac{(k+d)!}{(k+1)!\,(d-1)!} = G(k).
  \end{align*}
\end{proof}

On the other hand, the approximation can diverge for some initial data.

\begin{theorem}[Divergence of the Approximation]\label{thm:divergence-approx}
Assume the initial data is a Gaussian: $u_0(\x) = C e^{-\frac{|\x|^2}{4t_0}}$ a.e. $\x$ for some positive constants $C$ and $t_0$. Then for any fixed time $t$ such that $0 < t < t_0$ we have
  \[ \| u_k(\cdot,t) \|_\infty \rightarrow \infty \quad \text{as $k\to\infty$}. \]
  More precisely, when $d=1$ there is a constant $B>0$ which does not depend on $k$ such that
  \[ |u_k(0,t)| \geq B \Big(\frac{t_0}{t}\Big)^{ \lfloor\frac{k}{2}\rfloor} \sqrt{\frac{1}{\lfloor k/2 \rfloor -1}} \quad \text{for every sufficiently large $k$}. \]
  Also when $d \geq 2$ it holds that
  \[ |u_k(0,t)| \geq \frac{C}{(4t)^{d/2} \Gamma(d/2)} \Big(\frac{t_0}{t} - 1\Big) \Big(\frac{t_0}{t}\Big)^{ \lfloor\frac{k}{2} \rfloor -1} \quad \text{for every $k$}. \]
\end{theorem}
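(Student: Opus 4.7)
The plan is to evaluate $u_k(\x,t)$ at $\x = 0$ and thereby reduce the problem to analyzing a one-variable alternating series. The ingredients are $H_n(0) = 0$ for odd $n$, $H_{2m}(0) = (-1)^m (2m)!/m!$, and the Gaussian moment identity $\int_{-\infty}^{\infty} x^{2m} e^{-x^2/(4t_0)}\,dx = \sqrt{4\pi t_0}\,(2m)!\,t_0^m/m!$. Substituting $u_0 = C e^{-|\x|^2/(4t_0)}$ into \eqref{eq:approx-form} at $\x = 0$, only even multi-indices $\alpha = 2\beta$ survive, and after collecting factorials one obtains
\[
u_k(0,t) = C\Big(\frac{t_0}{t}\Big)^{d/2}\sum_{n=0}^{\lfloor k/2\rfloor}(-1)^n b_n, \qquad b_n := \frac{\Gamma(n+d/2)}{n!\,\Gamma(d/2)}\Big(\frac{t_0}{t}\Big)^n,
\]
where I use the identity $\sum_{|\beta|=n}\prod_{i=1}^d \binom{2\beta_i}{\beta_i} = 4^n\,\Gamma(n+d/2)/(n!\,\Gamma(d/2))$, which is the coefficient of $z^n$ in $(1-4z)^{-d/2} = \bigl(\sum_m \binom{2m}{m}z^m\bigr)^d$.

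\textbf{Alternating-series lower bound.} Since $b_{n+1}/b_n = \frac{n+d/2}{n+1}\cdot\frac{t_0}{t} \to t_0/t > 1$, the $b_n$ grow geometrically, so the series diverges; the issue is to obtain a lower bound on the partial sums $S_K := \sum_{n=0}^K (-1)^n b_n$ where $K = \lfloor k/2 \rfloor$. I would use the standard pairing trick: writing $|S_K| = (b_K - b_{K-1}) + (b_{K-2} - b_{K-3}) + \cdots$, every parenthesis is nonnegative whenever $b_n$ is monotone increasing, yielding $|S_K| \geq b_K - b_{K-1}$. For $d \geq 2$ the ratio $(n+d/2)/(n+1) \geq 1$, so monotonicity holds from $n = 0$; this gives $|S_K| \geq b_K(1 - t/t_0)$ for every $K \geq 1$, and multiplying by the prefactor $C(t_0/t)^{d/2}$ produces a lower bound of the form stated in the theorem. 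For $d=1$ one has $b_n = \binom{2n}{n}(t_0/(4t))^n$, and Stirling's approximation $\binom{2n}{n} \sim 4^n/\sqrt{\pi n}$ yields $b_n \sim (t_0/t)^n/\sqrt{\pi n}$; the same pairing argument gives $|S_K| \gtrsim (t_0/t)^K/\sqrt{K}$, which accounts for the $1/\sqrt{\lfloor k/2\rfloor - 1}$ factor in the statement.

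\textbf{Main obstacle.} The nontrivial step is extracting a \emph{lower} bound on an alternating partial sum with growing terms, since the standard alternating-series test only provides upper bounds. The pairing argument bridges this gap, but it requires monotonicity of $(b_n)$. For $d \geq 2$ monotonicity holds from $n = 0$, which is why the stated bound is valid for every $k$. For $d=1$ monotonicity holds only eventually (from the first index where $(2n-1)t_0 > 2nt$), which is why the one-dimensional conclusion is asserted only for sufficiently large $k$; the initial bounded head of the series is dominated by the exponentially growing tail and can be absorbed into the constant $B$.
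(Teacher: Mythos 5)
Your argument is correct, and its engine --- reducing $u_k(0,t)$ to an alternating partial sum $S_K=\sum_{n\le K}(-1)^n b_n$, $K=\lfloor k/2\rfloor$, with geometrically growing terms, bounding $|S_K|$ from below by pairing consecutive terms, and invoking Stirling when $d=1$ --- is exactly the paper's. Where you genuinely differ is in how the multi-index sum over $|\alpha|=2n$ is collapsed to the single coefficient $b_n$: the paper works at general radial $\x$ via hyperspherical coordinates and the Laguerre addition formula $L_n^{(a+b+1)}(x+y)=\sum_i L_i^{(a)}(x)L_{n-i}^{(b)}(y)$ combined with $H_{2n}(x)=(-1)^n 2^{2n} n!\,L_n^{(-1/2)}(x^2)$, and only afterwards sets $\x=0$; you specialize to the origin immediately and use the coefficient identity $\sum_{|\beta|=n}\prod_i\binom{2\beta_i}{\beta_i}=4^n\,\Gamma(n+d/2)/(n!\,\Gamma(d/2))$ extracted from $(1-4z)^{-d/2}$. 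Your route is more elementary: no Laguerre polynomials, no case split on the parity of $d$ in the constant $C(|\alpha|,d)$, and one formula covering $d=1$ and $d\ge 2$ simultaneously; the paper's machinery buys a closed form for $u_k(\x,t)$ at all radial points, which this theorem does not actually need. I checked your reduction
\[
u_k(0,t)=C\Big(\frac{t_0}{t}\Big)^{\frac{d}{2}}\sum_{n=0}^{\lfloor k/2\rfloor}(-1)^n\,\frac{\Gamma(n+d/2)}{n!\,\Gamma(d/2)}\Big(\frac{t_0}{t}\Big)^n,
\]
and it is right (e.g.\ it returns the exact value $u_0(0,t)=C(t_0/t)^{d/2}$). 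Note it does not reproduce the literal constant $\frac{C}{(4t)^{d/2}\Gamma(d/2)}$ in the statement for $d\ge2$: the paper's own computation drops a factor $(4t_0)^{d/2}$ when substituting $\int_0^\infty r^{j+d-1}u_0(r)\,dr=\frac{C}{2}(4t_0)^{(j+d)/2}\Gamma\big(\frac{j+d}{2}\big)$ into the Laguerre expansion, so your prefactor $C(t_0/t)^{d/2}$ is the correct one and divergence holds either way. The one place to be slightly more explicit is the $d=1$ pairing: since $b_{n+1}/b_n=\frac{2n+1}{2n+2}\cdot\frac{t_0}{t}$ exceeds $1$ only for $n\ge n_0(t_0/t)$, the pairing should run over $n_0\le n\le K$ with the fixed head $\sum_{n<n_0}|b_n|$ subtracted off; as you observe, this bounded head is swallowed by the exponentially growing $b_K-b_{K-1}$, which is precisely why the conclusion for $d=1$ is stated only for sufficiently large $k$.
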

\begin{proof}
Because the initial data is radially symmetric ($u_0(\x) = u_0(r), ~r=|\x|$), the approximation $u_k(\x)$ can be simplified. If $\alpha_i$ is an odd number,
  \begin{align*}
    \int_{-\infty}^\infty x_i^{\alpha_i} u_0(r) \,dx_i &= \int_0^\infty x_i^{\alpha_i} u_0(r) \,dx_i + \int_{-\infty}^0 x_i^{\alpha_i} u_0(r) \,dx_i \\
    & = \int_0^\infty x_i^{\alpha_i} u_0(r) \,dx_i - \int_0^{\infty} x_i^{\alpha_i} u_0(r) \,dx_i = 0.
  \end{align*}
Hence we may assume \emph{every $\alpha_i$'s are even}.
\par First we consider the one-dimensional case $d=1$. Then we have
  \begin{align*}
    \sqrt{\pi} \,u_k(0,t) &= \sum_{\substack{j \leq k \\ j:\text{even}}} \frac{\int x^j u_0(x)\,dx}{j!} (4t)^{-(j+1)/2} H_j(0) \\
    &= C \sum_{\substack{j \leq k \\ j:\text{even}}} \frac{1}{j!} \Big(\frac{t_0}{t}\Big)^{\frac{j+1}{2}} \Gamma\Big(\frac{j+1}{2}\Big) H_j(0) \\
    &= C\sqrt{\frac{\pi t_0}{t}} + C \sum_{\substack{0 < j \leq k \\ j:\text{even}}} \frac{1}{j!} \Big(\frac{t_0}{t}\Big)^{\frac{j+1}{2}} \Gamma\Big(\frac{j+1}{2}\Big) H_j(0) \\
    &= C \sqrt{\frac{\pi t_0}{t}} + C \pi^{-1/2} \sum_{\substack{0 < j \leq k \\ j:\text{even}}} \frac{(-1)^{j/2} }{j!} \Big(\frac{t_0}{t}\Big)^{\frac{j+1}{2}} \Gamma\Big(\frac{j+1}{2}\Big)^2 \,2^j.
  \end{align*}
  We may assume $k$ is even. Also assume $k/2$ is even for now. Then it holds that
  \begin{align*}
    \frac{\sqrt{\pi} (-1)^{k/2}}{C} u_k(0,t) &= (-1)^{k/2} \sqrt{\frac{\pi t_0}{t}} + \pi^{-1/2} \sum_{\substack{0 < j \leq k \\ j:\text{even}}} (-1)^{(j+k)/2} \Big(\frac{t_0}{t}\Big)^{\frac{j+1}{2}} \frac{\Gamma((j+1)/2)^2}{\Gamma(j+1)} \,2^j \\
    &= (-1)^{k/2} \sqrt{\frac{\pi t_0}{t}} + \sum_{\substack{0 < j \leq k \\ j:\text{even}}} (-1)^{(j+k)/2} \Big(\frac{t_0}{t}\Big)^{\frac{j+1}{2}} \frac{\Gamma((j+1)/2)}{\Gamma(j/2 + 1)} \\
    & \text{(by duplication formula)} \\
    &= \sqrt{\frac{\pi t_0}{t}} + \Big(\frac{t_0}{t}\Big)^{\frac{k+1}{2}} \frac{\Gamma((k+1)/2)}{\Gamma(k/2 + 1)} - \Big(\frac{t_0}{t}\Big)^{\frac{k-1}{2}} \frac{\Gamma((k-1)/2)}{\Gamma(k/2)} \\
    & \quad + \cdots + \Big(\frac{t_0}{t}\Big)^{\frac{5}{2}} \frac{\Gamma(5/2)}{\Gamma(3)} - \Big(\frac{t_0}{t}\Big)^{\frac{3}{2}} \frac{\Gamma(3/2)}{\Gamma(2)}.
  \end{align*}
  By Stirling's formula \cite[p257]{ab72}, there exists a constant $B > 0$ such that
  \[ \Big|\frac{\Gamma((k+1)/2)}{\Gamma(k/2 + 1)} - \sqrt{\frac{2}{k}} \Big| = \frac{B}{k} \quad \text{for every sufficiently large $k$}. \]
  Therefore for every sufficiently large $k$,
  \begin{align*}
    \frac{\sqrt{\pi} (-1)^{k/2}}{C} u_k(0,t) &\geq \Big(\frac{t_0}{t}\Big)^{\frac{k+1}{2}} \Big( \sqrt{\frac{2}{k}} - \frac{B}{k} \Big) - \Big(\frac{t_0}{t}\Big)^{\frac{k-1}{2}} \Big( \sqrt{\frac{2}{k-2}} + \frac{B}{k-2} \Big) \\
    &= \Big(\frac{t_0}{t}\Big)^{\frac{k-1}{2}} \sqrt{\frac{2}{k-2}} \Big\{ \frac{t_0}{t} \Big( \sqrt{\frac{k-2}{k}} - \frac{B\sqrt{k-2}}{\sqrt{2}k} \Big) - \Big( 1 + \frac{B}{\sqrt{2(k-2)}} \Big) \Big\} \\
    &\gtrsim \Big(\frac{t_0}{t}\Big)^{\frac{k-1}{2}} \sqrt{\frac{2}{k-2}}.
  \end{align*}
Similar computation works when $k/2$ is odd. We have just proved that for every sufficiently large $k$,
\[ |u_k(0,t)| \gtrsim \Big(\frac{t_0}{t}\Big)^{k/2} k^{-1/2}. \]
\par Next consider the case when $d \geq 2$. Then using hyperspherical coordinates and trigonometric integrals, we can compute
\begin{equation*}
    \frac{1}{\alpha!} \int \x^\alpha u_0(\x)\,d\x = \frac{C(|\alpha|,d)}{2^{|\alpha|/2} \prod_{i=1}^d (\alpha_i/2)!} \int_0^\infty r^{|\alpha|+d-1} u_0(r)\,dr
\end{equation*}
where $C(|\alpha|,d)$ is a nonzero constant defined by
\[ C(|\alpha|,d) := \begin{cases}
    \frac{(2\pi)^{d/2}}{(|\alpha|+d-2) (|\alpha|+d-4) \cdots 2} = \frac{(2\pi)^{d/2}}{2^{(|\alpha|+d-2)/2} \Gamma((|\alpha|+d)/2)} & \text{if $d$ is even}, \\
    \frac{2\,(2\pi)^{(d-1)/2}}{(|\alpha|+d-2) (|\alpha|+d-4) \cdots 1} = \frac{(2\pi)^{(d-1)/2} 2^{(|\alpha|+d+1)/2} \Gamma((|\alpha|+d+1)/2)}{\Gamma(|\alpha|+d)} & \text{if $d$ is odd}.
\end{cases} \]
Detailed computation can be found in \cite[p.713]{Chung12}. Also using generalized Laguerre polynomials \cite[p.775]{ab72}
\[ L_n^{(a)}(x) := \frac{x^{-a}e^{x}}{n!} \frac{d^n}{dx^n} (e^{-x} x^{n+a})\]
and their properties \cite[p.785,p.779]{ab72}
\begin{align*}
L_n^{(a+b+1)}(x+y) &= \sum_{i=0}^n L_i^{(a)}(x) L_{n-i}^{(b)}(y), \\
H_{2n}(x) &= (-1)^n \,2^{2n} \,n! \,L_n^{(-1/2)}(x^2),
\end{align*}
we can compute the approximation $u_k$:
\begin{align*}
    (4\pi t)^{\frac{d}{2}} e^{\frac{|\x|^2}{4t}} u_k(\x,t) &= \sum_{\substack{j \leq k \\ j:\text{even}}} \sum_{|\alpha| = j} (4t)^{-\frac{|\alpha|}{2}}  \frac{\int \x^\alpha u_0(\x)\,d\x}{\alpha!} \prod_{i=1}^d H_{\alpha_i} \Big(\frac{x_i}{2\sqrt{t}}\Big) \\
    &= \sum_{\substack{j \leq k \\ j:\text{even}}} (4t)^{-\frac{j}{2}} \int_0^\infty r^{j+d-1} u_0(r)\,dr \sum_{|\alpha|=j} \frac{C(j,d)}{2^{j/2} \prod_{i=1}^d (\alpha_i/2)!} \prod_{i=1}^d H_{\alpha_i} \Big(\frac{x_i}{2\sqrt{t}}\Big) \\
    &= \sum_{\substack{j \leq k \\ j:\text{even}}} (4t)^{-\frac{j}{2}} \frac{C}{2} (4t_0)^{\frac{j+d}{2}} \Gamma\Big(\frac{j+d}{2}\Big) (-2)^{\frac{j}{2}} C(j,d) \sum_{|\alpha|=j} \prod_{i=1}^d L_{\alpha_i/2}^{(-1/2)}\Big(\frac{x_i^2}{4t}\Big) \\
    &= \frac{C}{2} \sum_{\substack{j \leq k \\ j:\text{even}}} \Big(-\frac{2t_0}{t}\Big)^{\frac{j}{2}} \Gamma\Big(\frac{j+d}{2}\Big) C(j,d) \,L_{j/2}^{\big(\frac{d-2}{2}\big)} \Big(\frac{r^2}{4t}\Big).
  \end{align*}
Also from \cite[p.777]{ab72},
\[ L_n^{(a)}(0) = {n+a \choose n} = \frac{\Gamma(n+a+1)}{\Gamma(n+1) \Gamma(a+1)} \quad \text{if $a \geq 0$}, \]
it holds that
  \begin{align*}
    \frac{2}{C} (4\pi t)^{\frac{d}{2}} u_k(0,t) &= \sum_{\substack{j \leq k \\ j:\text{even}}} \Big(-\frac{2t_0}{t}\Big)^{\frac{j}{2}} \Gamma\Big(\frac{j+d}{2}\Big) C(j,d) \,L_{j/2}^{\big(\frac{d-2}{2}\big)}(0). \\
    &= \sum_{\substack{j \leq k \\ j:\text{even}}} \Big(-\frac{2t_0}{t}\Big)^{\frac{j}{2}} \frac{\Gamma((j+d)/2)^2}{\Gamma(j/2 + 1) \Gamma(d/2)} C(j,d).
  \end{align*}
We may assume that $k$ is even. When the dimension $d$ is even and $k/2$ is even, we have
\begin{align*}
    (-1)^{k/2} \Gamma(d/2) &\frac{(4t)^{d/2}}{C} u_k(0,t) = \sum_{\substack{j \leq k \\ j:\text{even}}} (-1)^{\frac{j+k}{2}} \Big(\frac{t_0}{t}\Big)^{\frac{j}{2}} \frac{\Gamma((j+d)/2)}{\Gamma(j/2 + 1)} \\
    &= \Big(\frac{t_0}{t}\Big)^{\frac{k}{2}} \frac{\Gamma((k+d)/2)}{\Gamma(k/2 + 1)} - \Big(\frac{t_0}{t}\Big)^{\frac{k}{2}-1} \frac{\Gamma((k+d)/2 - 1)}{\Gamma(k/2)} + \cdots + (-1)^{k/2} \Gamma(d/2) \\
    &\geq \Big(\frac{t_0}{t}\Big)^{\frac{k}{2}} \frac{\Gamma((k+d)/2)}{\Gamma(k/2 + 1)} - \Big(\frac{t_0}{t}\Big)^{\frac{k}{2}-1} \frac{\Gamma((k+d)/2 - 1)}{\Gamma(k/2)} \\
    & \qquad + \cdots + \Big(\frac{t_0}{t}\Big)^2 \frac{\Gamma(d/2 + 2)}{3} - \Big(\frac{t_0}{t}\Big) \frac{\Gamma(d/2 + 1)}{\Gamma(2)} \\
    &\geq \Big(\frac{t_0}{t}\Big)^{\frac{k}{2}} - \Big(\frac{t_0}{t}\Big)^{\frac{k}{2}-1} + \cdots + \Big(\frac{t_0}{t}\Big)^2 - \Big(\frac{t_0}{t}\Big) \\
    &= \Big(\frac{t_0}{t} - 1\Big) \Big\{ \Big(\frac{t_0}{t}\Big)^{\frac{k}{2}-1} + \Big(\frac{t_0}{t}\Big)^{\frac{k}{2}-3} + \cdots + \frac{t_0}{t} \Big\} \\
    &\geq \Big(\frac{t_0}{t} - 1\Big) \Big(\frac{t_0}{t}\Big)^{\frac{k}{2}-1}.
\end{align*}
The same result holds when $d$ is even and $k/2$ is odd. Now assume the dimension $d$ is odd. Then we have
  \begin{align*}
    (-1)^{k/2} \Gamma(d/2) &\frac{2\sqrt{\pi} t^{d/2}}{C} u_k(0,t) = \sum_{\substack{j \leq k \\ j:\text{even}}} (-1)^{\frac{j+k}{2}} 2^j \Big(\frac{t_0}{t}\Big)^{\frac{j}{2}} \frac{\Gamma((j+d)/2)^2 \Gamma((j+d+1)/2)}{\Gamma(j/2+1) \Gamma(j+d)} \\
    &= 2^{1-d} \sqrt{\pi} \sum_{\substack{j \leq k \\ j:\text{even}}} (-1)^{\frac{j+k}{2}} \Big(\frac{t_0}{t}\Big)^{\frac{j}{2}} \frac{\Gamma((j+d)/2)}{\Gamma(j/2+1)} \quad \text{(by duplication formula)} \\
    &\geq 2^{1-d} \sqrt{\pi} \Big(\frac{t_0}{t} - 1\Big) \Big(\frac{t_0}{t}\Big)^{\frac{k}{2}-1} \quad \text{(by the same argument as above)}.
  \end{align*}
Summing up, we have just proved that when $d \geq 2$,
\[ |u_k(0,t)| \geq \frac{C}{(4t)^{d/2} \Gamma(d/2)} \Big(\frac{t_0}{t} - 1\Big) \Big(\frac{t_0}{t}\Big)^{\frac{k}{2}-1}, \]
which completes the proof.
\end{proof}

\begin{remark}
Kim and Ni \cite{KN09} \emph{conjectured} that if the dimension is $d=1$ and the initial data is a Gaussian, $u_0(x) = \frac{1}{\sqrt{4\pi t_0}} e^{-x^2/4t_0}$, it holds that
\begin{equation*}
    \lim_{k \rightarrow \infty} \frac{\| u(\cdot,t) - u_{k+2}(\cdot,t) \|_\infty}{\| u(\cdot,t) - u_{k}(\cdot,t) \|_\infty} = \frac{t_0}{t},
  \end{equation*}
where $u_k(x,t)$ is the approximation \eqref{eq:approx-form} in one spatial dimension. This is an open problem.
\end{remark}

\section{Relation with Eigenfunction Expansion}
In this section, we relates the approximation \eqref{eq:approx-form} with eigenfunction expansion. Then it will be clarified that there is a close relationship between them, and between their convergence criterion.

Following \cite{MR1491842}, we define similarity variables:
\[ u(\x,t) = t^{-\frac{d}{2}} U(\mathbf{z},\tau), \quad \mathbf{z} = \frac{\mathbf{x}}{2 \sqrt{t}}, \quad \tau = \ln t. \]
Then the heat equation is transformed to
\begin{equation} \label{heat:similarity-eq}
U_\tau = \frac{1}{4} \sum_{i=1}^d \frac{\partial}{\partial z_i} \left( e^{-z_i^2} \frac{\partial}{\partial z_i} \left[ e^{z_i^2} U \right] \right).
\end{equation}
We consider an eigenvalue problem corresponding to \eqref{heat:similarity-eq}. Let $\phi_n(z) , \,n \geq 0$ be eigenfunctions satisfying
\[ \frac{1}{4} \frac{d}{dz} \left( e^{-z^2} \frac{d}{dz} \left[ e^{z^2} \phi_n(z) \right] \right) = \lambda_n \phi_n(z). \]
This equation can be rewritten in a self-adjoint form using change of variables $\widetilde{\phi}_n(z) := e^{z^2} \phi_n(z)$. Then the solution is $\widetilde{\phi}_n(z) = H_n(z)$, an Hermite polynomial, with eigenvalue $\lambda_n = -n/2$. Hence the eigenfunction is $\phi_n(z) = e^{-z^2} H_n(z)$ and eigenfunction expansion for the solution $U$ is
\begin{align}
U(\z,\tau) &= \sum_{\alpha \geq 0} a_\alpha \prod_{i=1}^d \phi_{\alpha_i}(z_i) \,e^{\lambda_{\alpha_i} \tau}  \nonumber \\
&= e^{-|\z|^2} \sum_{\alpha \geq 0} \, a_{\alpha} \,e^{-\frac{|\alpha|}{2} \tau} \prod_{i=1}^d H_{\alpha_i}(z_i). \label{heat:eigenfunction-expansion}
\end{align}
To evaluate the coefficients $a_\alpha$, fix $\tau = \tau_0 > -\infty$, multiply both sides of \eqref{heat:eigenfunction-expansion} by $\z^\alpha$ and integrate. Then the orthogonality relation
\[ \int_{-\infty}^\infty H_m(z)\, z^n\, e^{-z^2}\,dz = n!\,\sqrt{\pi}\,\delta_{mn} \]
yields that
\begin{equation*}
\int \z^\alpha \, U(\z,\tau_0)\,d\z = a_\alpha \, e^{-\frac{|\alpha|}{2}\tau_0} \alpha! \, \pi^{\frac{d}{2}}.
\end{equation*}
Hence the coefficients are
\[ \begin{split}
a_\alpha &= \frac{\pi^{-\frac{d}{2}}}{\alpha!} e^{\frac{|\alpha|}{2} \tau_0} \int \z^\alpha \, U(\z, \tau_0)\,d\z \\
&= \frac{2^{-|\alpha|-d} \,\pi^{-\frac{d}{2}}}{\alpha!} \int \x^\alpha u(\x,t_0)\,d\x =: a_\alpha(t_0).
\end{split} \]
Note that in the coefficients $a_\alpha$, the time $t_0 = e^{\tau_0} > 0$ is fixed. With these coefficients $a_\alpha = a_\alpha(t_0)$, the eigenfunction expansion \eqref{heat:eigenfunction-expansion} is a solution for $t \geq t_0 > 0$.
\par On the other hand, the approximation \eqref{eq:approx-form} can be written in the similarity variables as
\[ U(\z,\tau) \approx e^{-|\z|^2} \sum_{\alpha \geq 0} \, a_\alpha(0) \,e^{-\frac{|\alpha|}{2} \tau} \prod_{i=1}^d H_{\alpha_i}(z_i). \]
Hence the approximation is essentially the same as the eigenfunction expansion. The only difference is that \emph{the approximation uses the moments of the initial data instead of the moments at a positive time.}

Finally we examine when the eigenfunction expansion \eqref{heat:eigenfunction-expansion} is valid. Multiplying both sides of \eqref{heat:eigenfunction-expansion} by $e^{|\z|^2}$ we have
\begin{equation} \label{heat:Hermite-expansion}
e^{|\z|^2} U(\z,\tau) = \sum_{\alpha \geq 0} \, a_{\alpha} \,e^{-\frac{|\alpha|}{2} \tau} \prod_{i=1}^d H_{\alpha_i}(z_i).
\end{equation}
Recall that Hermite polynomials form an orthogonal basis for the Hilbert space of functions $f(z)$ satisfying
\[ \int_{-\infty}^\infty |f(z)|^2 \, e^{-z^2}\,dz < \infty. \]
For fixed time $\tau > -\infty$, equation \eqref{heat:Hermite-expansion} is an expansion of $e^{|\z|^2} U(\z,\tau)$ in terms of Hermite polynomials so that the equation is valid when
\[ \int \big( e^{|\z|^2} U(\z,\tau) \big)^2 e^{-|\z|^2}\,d\z = \int e^{|\z|^2} U^2(\z,\tau) \,d\z < \infty. \]
If the inital data is given by $u_0(\x) = C e^{-\frac{|\x|^2}{4t_0}}$ for some positive constants $C$ and $t_0$, then the solution is
\[ u(\x,t) = \frac{C}{\big( 4\pi\,(t + t_0) \big)^{d/2}} e^{-\frac{|\x|^2}{4(t + t_0)}} \]
or in the similarity variables
\[ U(\z,\tau) = \frac{C}{\big( 4\pi\,(1 + t_0/t) \big)^{d/2}} e^{-\frac{|\z|^2}{1 + t_0/t}}. \]
Hence 
\[ \int e^{|\z|^2} U^2(\z,\tau) \,d\z = \frac{C^2}{\big( 4\pi (1+t_0/t) \big)^d} \int \exp\Big\{ \Big( 1 - \frac{2}{1+t_0/t} \Big) |\z|^2 \Big\} \,d\z \]
is finite if and only if $1-\frac{2}{1+t_0/t} < 0$ or
\[t > t_0,\]
which agrees with a convergence condition for our approximation in Corollary \ref{heat-approx:cor}.

\section*{Acknowlegements}
This research was supported by the 2014 research grant of Dankook University.


\begin{thebibliography}{99}
\bibitem{ab72}
M. Abramowitz and I. A. Stegun, ``Handbook of mathematical functions with formulas, graphs, and mathematical tables'', Reprint of the 1972 edition. Dover Publications, Inc., New York, 1992.

\bibitem{bonan90}
S. S. Bonan and D. S. Clark, \emph{Estimates of the Hermite and the Freud polynomials}, J. Approx. Theory \textbf{63} (1990), 210--224.

\bibitem{Bragg65}
L. R. Bragg, \emph{The radial heat polynomials and related functions}, Trans. Amer. Math. Soc. \textbf{119} (1965), 270--290.

\bibitem{Chung12}
J. Chung, \emph{Long-time asymptotics of the zero level set for the heat equation}, Quart. Appl. Math. \textbf{70} (2012), 705--720.

\bibitem{MR2600962}
J. Chung, E. Kim and Y.-J. Kim, \emph{Asymptotic agreement of moments and higher order contraction in the Burgers equation}, J. Differential Equations \textbf{248} (2010), 2417–-2434.

\bibitem{MR1183805}
J. Duoandikoetxea and E. Zuazua, \emph{Moments, masses de Dirac et d\'{e}composition de fonctions}, C. R. Acad. Sci. Paris S\'{e}r. I Math. \textbf{315} (1992), 693–-698.

\bibitem{Fitouhi89}
A. Fitouhi, \emph{Heat ``polynomials'' for a singular differential operator on $(0,\infty)$}, Constr. Approx. \textbf{5} (1989), 241--270.

\bibitem{Haimo65}
D. T. Haimo, \emph{Functions with the Huygens property}, Bull. Amer. Math. Soc. \textbf{71} (1965), 528--532.

\bibitem{MR2773137}
Y.-J. Kim, \emph{A generalization of the moment problem to a complex measure space and an approximation technique using backward moments}, Discrete Contin. Dyn. Syst. \textbf{30} (2011), 187–-207.

\bibitem{KN09}
Y. J. Kim and W.-M. Ni, \emph{Higher order approximations in the heat equation and the truncated moment problem}, SIAM J. Math. Anal. \textbf{40} (2009), 2241–-2261.

\bibitem{Rosenbloom59}
P. C. Rosenbloom and D. V. Widder, \emph{Expansions in terms of heat polynomials and associated functions}, Trans. Amer. Math. Soc. \textbf{92} (1959), 220--266.

\bibitem{Widder61}
D. V. Widder, \emph{Series expansions of solutions of the heat equation in $n$ dimensions}, Ann. Mat. Pura Appl. \textbf{55} (1961), 389--409.

\bibitem{MR1491842}
T. Witelski and A. J. Bernoff, \emph{Self-similar asymptotics for linear and nonlinear diffusion equations}, Stud. Appl. Math. \textbf{100} (1998), 153–-193.
\end{thebibliography}
\end{document}